\author{ Michael C.\
Laskowski  
and Douglas S.\ Ulrich
 \thanks{Both authors partially supported
by NSF grant DMS-1855789.}
\\
Department of Mathematics\\University of Maryland
}
\def\includeE#1{{\lhook\kern-3.5pt\joinrel\smash{
    \mathop{\longrightarrow}\limits^{#1}}}}
\def\efor/{Example~\ref{E4}}
\def\BL/{Baldwin--Lachlan}
\def\Bu/{Buechler}
\def\Hr/{Hrushovski}
\def\lm/{locally modular}
\def\wm/{weakly minimal}
\def\nm/{non--modular}
\def\ss/{superstable}
\def\ud/{unidimensional}
\def\sm/{strongly minimal}
\def\abar{\overline{a}}
\def\hbar{\overline{h}}
\def\stp{{\rm stp}}
\def\tr/{trivial}
\def\nt/{non--trivial}
\def\st/{strong type}
\def\conc{{\char'136}}
\def\abar{\bar{a}}
\def\phi{\varphi}
\def\B{{\cal B}}
\def\FF{{\bf F}}
\def\stp{{\rm stp}}
\def\Fa0{{\FF^a_{\aleph_0}}}
\def\<{\langle}
\def\>{\rangle}
\newtheorem{Theorem}{Theorem}[section]
\newtheorem{Proposition}[Theorem]{Proposition}
\newtheorem{Definition}[Theorem]{Definition}
\newtheorem{Remark}[Theorem]{Remark}
\newtheorem{Lemma}[Theorem]{Lemma}
\newtheorem{Corollary}[Theorem]{Corollary}
\def\Mod{{\rm Mod}}
\def\iso{\cong}
\def\G{{\cal G}}
\def\H{{\cal H}}
\def\V{{\mathbb V}}
    \newcommand\myrestriction{\mathord\restriction}
\def\mr#1{\myrestriction_{#1}}
\begin{document}

	\title{Most(?) theories have Borel complete reducts}
	
	\date{\today} 
	
	\maketitle

\begin{abstract}	  We prove that many seemingly simple theories have Borel complete reducts.  Specifically, if a countable theory  has uncountably many complete 1-types,
then it has a Borel complete reduct.  Similarly, if $Th(M)$ is not small, then $M^{eq}$ has a Borel complete reduct, and if a theory $T$  is not $\omega$-stable, then 
the elementary diagram of  some countable model of $T$ has a Borel complete reduct.
	\end{abstract}

\section{Introduction}	
In their seminal paper \cite{FS}, Friedman and Stanley define and develop a notion of {\em Borel reducibility} among classes of structures with universe $\omega$ in a fixed, countable language $L$ that are Borel and invariant under permutations of $\omega$.  It is well known (see e.g., \cite{KechrisDST} or \cite{GaoIDST}) that such classes are of the form $\Mod(\Phi)$, the set of models of $\Phi$ whose universe is precisely $\omega$ for some sentence $\Phi\in L_{\omega_1,\omega}$, but here we concentrate on
first-order, countable theories $T$.   For countable theories $T,S$ in possibly different language, a  {\em Borel reduction} is a Borel function $f:\Mod(T)\rightarrow \Mod(S)$ that satisfies $M\iso N$ if and only if $f(M)\iso f(N)$.  One says that $T$ is {\em Borel reducible} to $S$    if there is a Borel reduction $f:\Mod(T)\rightarrow \Mod(S)$.
As Borel reducibility is transitive, this induces a quasi-order on the class of all countable theories, where we say $T$ and $S$ are
 {\em Borel equivalent} if there are Borel reductions in both directions. 
In \cite{FS}, Friedman and Stanley show that among Borel invariant classes (hence among countable first-order theories) there is a maximal class with respect to $\le_B$.  We say $\Phi$ is {\em Borel complete} if it is in this maximal class.  Examples include the theories of graphs, linear orders, groups, and fields.

The intuition is that Borel complexity of a theory $T$ is related to the complexity of invariants that describe the isomorphism types of countable models of $T$.  
Given an $L$-structure $M$, one naturally thinks of the reducts $M_0$ of $M$ to be `simpler objects' hence the invariants for a reduct `should' be no more complicated than
for the original $M$, but we will see that this intuition is incorrect.  As a paradigm, let $T$ be the theory of `independent unary predicates' i.e., $T=Th(2^\omega,U_n)$, where
each $U_n$ is a unary predicate interpreted as $U_n=\{\eta\in 2^\omega:\eta(n)=1\}$.  The countable models of $T$ are rather easy to describe.  
The isomorphism type of a model is specified by which countable, dense subset of `branches' is realized, and  how many elements realize each of those branches.  However, with Theorem~\ref{1types},  we will see that $T$ has a Borel complete reduct.

To be precise about reducts, we have the following definition.

\begin{Definition} \label{reduct} {\em  Given an $L$-structure $M$, a {\em reduct $M'$ of $M$} is an $L'$-structure with the same universe as $M$, and for which the interpretation every atomic $L'$-formula $\alpha(x_1,\dots,x_k)$ is an $L$-definable subset of $M^k$ (without parameters). 
An $L'$-theory $T'$ is a {\em reduct of an $L$-theory $T$} if $T'=Th(M')$ for some reduct $M'$ of some model $M$ of $T$.}
\end{Definition}
In the above definition, it would be equivalent to require that the interpretation in $M'$ of every $L'$-formula $\theta(x_1,\dots,x_k)$ is a 0-definable subset of $M^k$.

\section{An engine for  Borel completeness results}

This section is devoted to proving  Borel completeness for a specific family of theories.  All of the theories  $T_h$, are in the same language  $L=\{E_n:n\in\omega\}$
and are indexed by strictly increasing functions $h:\omega\rightarrow\omega\setminus\{0\}$.  For a specific choice of $h$, the theory $T_h$ asserts that
\begin{itemize}
\item Each $E_n$ is an equivalence relation with exactly $h(n)$ classes; and
\item  The $E_n$'s cross-cut, i.e., for all nonempty, finite $F\subseteq\omega$, $E_F(x,y):=\bigwedge_{n\in F} E_n(x,y)$ is an equivalence relation with
precisely $\Pi_{n\in F} h(n)$ classes.
\end{itemize}

It is well known that each of these theories $T_h$ is complete and admits elimination of quantifiers.  Thus, in any model of $T_h$,
there is a unique 1-type.  However, the strong type structure is complicated.\footnote{Recall that in any structure $M$,
two elements $a,b$ have the same {\em strong type}, $\stp(a)=\stp(b)$, if $M\models E(a,b)$ for every 0-definable equivalence relation.
Because of the quantifier elimination, in any model $M\models T_h$, $\stp(a)=\stp(b)$ if and only if $M\models E_n(a,b)$ for every $n\in\omega$.} 
So much so, that the whole of this section is devoted to the proof of:

\begin{Theorem}  \label{big}  For any strictly increasing $h:\omega\rightarrow\omega\setminus\{0\}$, $T_h$ is Borel complete.
\end{Theorem}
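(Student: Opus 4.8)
The plan is to prove Borel completeness of $T_h$ by exhibiting a Borel reduction from a known Borel complete class. The natural candidate source is the class of countable graphs, or better yet, some combinatorial object whose isomorphism type is captured by the strong-type structure of models of $T_h$. The key observation driving the whole approach is the footnote: because of quantifier elimination, in a model $M\models T_h$ the strong type of an element $a$ is exactly determined by the infinite sequence $\langle a/E_n : n\in\omega\rangle$, recording which $E_n$-class $a$ falls into for each $n$. Since $E_n$ has $h(n)$ classes and the $E_n$ cross-cut, a strong type is essentially a point in the product $\prod_{n} h(n)$, i.e.\ a ``branch'' through a tree of finite branching. This is where the complexity hides: a countable model is free to realize an arbitrary countable set of branches, with arbitrary multiplicities, and this branch-structure looks like a tree, which is the kind of object that supports Borel completeness.

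\medskip\noindent
Concretely, I would first reorganize the presentation by thinking of the index set $\omega$ as arranged into a tree. The cross-cutting equivalence relations let us view each element of $M$ as determining a path: the $E_{\le n}$-class of $a$ (the intersection of the first $n$ relations) refines as $n$ grows, tracing a branch through the tree $T_h = \{s : s(n) < h(n)\}$ of finite sequences bounded by $h$. The isomorphism type of $M$ is then recovered from the set of branches realized together with the number of realizations of each. The plan is to encode an arbitrary countable structure (say a graph, or a tree of arbitrary countable descriptive complexity) into the \emph{pattern} of which branches are realized and how the realized branches sit relative to one another in $T_h$. Because $h$ is strictly increasing and unbounded, at each level $n$ there are $h(n)\to\infty$ available classes, giving enough ``room'' to code arbitrary branching data and arbitrary finite combinatorial configurations among the coded branches.

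\medskip\noindent
The technical core is to define the Borel map $f$ so that: (i) given a structure $M_0$ in the source class, $f(M_0)\models T_h$; (ii) $f$ is Borel as a function between the relevant spaces of structures on $\omega$; and (iii) $M_0\cong M_0'$ iff $f(M_0)\cong f(M_0')$. Injectivity-up-to-isomorphism is the crucial direction: I must arrange the coding so that an isomorphism of the $T_h$-models induces, and is induced by, an isomorphism of the source structures. This means the branch-realization data must be \emph{rigidly} tied to the combinatorics of $M_0$, so that no ``accidental'' automorphism of the ambient tree $T_h$ can identify non-isomorphic codes. The standard device is to mark distinguished branches with recognizable signatures (e.g.\ prescribed multiplicities, or prescribed finite branching patterns at designated levels) so that the realized-branch set, as a subset of $T_h$ with multiplicities, is a faithful invariant of $M_0$.

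\medskip\noindent
I expect the main obstacle to be exactly this faithfulness: ensuring that the reduction is injective on isomorphism types despite the large automorphism group of the cross-cutting structure. Since $T_h$ has a unique $1$-type and highly homogeneous models, two elements are interchangeable unless separated by some $E_n$, so all the coding power must come from the strong-type (branch) level and from multiplicities of realizations. I would handle this by building, inside $T_h$-models, a sufficiently rigid ``skeleton'' of branches whose relative positions in $T_h$ encode the source structure, using unbounded branching $h(n)\to\infty$ to simulate arbitrary adjacency/edge relations and using multiplicity counts (how many elements realize a given strong type) as an additional, permutation-invariant parameter. Verifying Borelness of $f$ should then be routine, as each step is an explicit arithmetical construction on $\omega$; the real work is the back-and-forth argument showing the coded skeleton is recoverable up to isomorphism.
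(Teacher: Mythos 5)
Your high-level strategy is the same as the paper's: reduce countable graphs to $T_h$ by coding a graph into the set of realized strong types (branches through $\prod_n [h(n)]$, i.e.\ functions $f$ with $f(n)\in[h(n)]$) together with the multiplicities of their realizations. But the proposal stops exactly where the proof begins: everything you defer as ``the real work'' (the choice of skeleton, the faithfulness argument, the back-and-forth) is the entire mathematical content, and none of it is constructed. Worse, the one difficulty you do single out is only half of the problem, and arguably the easier half. You stress \emph{rigidity} (no accidental automorphism should identify codes of non-isomorphic graphs), but the dual requirement is \emph{homogeneity}: isomorphic graphs must yield isomorphic models, so every permutation $\sigma$ of the vertex set must be implemented by an automorphism of the coded structure moving the coding branches accordingly. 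A skeleton ``rigidly tied'' to the graph by recognizable signatures at designated levels, as you propose, would typically destroy this direction. Managing the tension between these two demands is the actual theorem.

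Concretely, the paper resolves it as follows, and each ingredient is absent from your proposal. (a) Vertices $i\in\omega$ are coded by branches $f_i$ chosen pairwise \emph{eventually different}; this choice is possible precisely because $h$ is strictly increasing, which is the only place the hypothesis enters --- your proposal never locates where monotonicity is used beyond saying there is ``room.'' (b) An edge $(i,j)$ is coded by realizing, with multiplicity two, the interleaved branch $d_{i,j}$ agreeing with $f_i$ on even coordinates and $f_j$ on odd ones, while vertex branches are realized with multiplicity one. (c) All coding is done with $\sim$-classes, where $f\sim f'$ means $f$ and $f'$ disagree at only finitely many coordinates; this simultaneously makes the realized branch set dense (so the code is an elementary substructure of a large homogeneous model, hence a model of $T_h$ at all --- a point your item (i) needs but does not address) and makes the key homogeneity lemma true: every $\sigma\in Sym(\omega)$ is respected by some $g\in\prod_n Sym([h(n)])$ carrying each $[f_i]$ to $[f_{\sigma(i)}]$, and hence each $[d_{i,j}]$ to $[d_{\sigma(i),\sigma(j)}]$. (d) Conversely, any isomorphism of coded models preserves $E_\infty$-classes and their sizes and preserves the pattern of coordinate equalities, hence induces a $\sim$-preserving bijection of branch sets from which the graph is recovered. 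Without (a)--(d), or substitutes for them, the proposal is a plausible plan rather than a proof, and its emphasis on rigidity alone points away from the construction that actually works.
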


\begin{proof}  Fix a strictly increasing function $h:\omega\rightarrow\omega\setminus\{0\}$.  We begin by describing representatives $\B$ of the strong types
and a group $G$ that acts faithfully and transitively on $\B$.
As notation, for each $n$, let $[h(n)]$ denote the $h(n)$-element set $\{1,\dots,h(n)\}$ and let $Sym([h(n])$ be the (finite) group of permutations of $[h(n)]$.  
Let $$\B=\{f:\omega\rightarrow\omega:f(n)\in[h(n)] \ \hbox{for all $n\in\omega\}$}$$
and let $G=\Pi_{n\in\omega} Sym([h(n)])$ be the direct product. As notation, for each $n\in\omega$, let $\pi_n:G\rightarrow Sym([h(n)])$ be the natural projection map.
Note that $G$ acts coordinate-wise on $\B$ by: For $g\in G$ and $f\in \B$, $g\cdot f$ is the element of $\B$ satisfying
$g\cdot f(n)=\pi_n(g)(f(n))$.

Define an equivalence relation $\sim$ on $\B$ by:
$$f\sim f'\quad \hbox{if and only if} \quad \{n\in\omega:f(n)\neq f'(n)\}\ \hbox{is finite.}$$
For $f\in\B$, let $[f]$ denote the $\sim$-class of $f$ and, abusing notation somewhat, for $W\subseteq\B$
$$[W]:=\bigcup\{[f]:f\in W\}.$$ 
Observe that for every $g\in G$, the permutation of $\B$ induced by the action of $g$ maps $\sim$-classes onto $\sim$-classes, i.e., $G$ also acts transitively on $\B/\sim$.

We first identify a countable family of $\sim$-classes that are `sufficiently indiscernible'.    Our first lemma is where we use the fact that the function $h$ defining
$T_h$ is strictly increasing.

\begin{Lemma}  \label{Claim1}  There is a countable set $Y=\{f_i:i\in\omega\}\subseteq \B$ such that whenever $i\neq j$, $\{n\in\omega: f_i(n)=f_j(n)\}$ is finite.
\end{Lemma}

\begin{proof}  We recursively construct $Y$ in $\omega$ steps.  Suppose $\{f_i:i<k\}$ have been chosen.  Choose an integer $N$ large enough so that $h(N)>k$ (hence $h(n)>k$ for all
$n\ge N$).  Now, construct $f_k\in\B$ to satisfy $f_k(n)\neq f_i(n)$ for all $n\ge N$ and all $i<k$.
\end{proof}

{\em Fix an enumeration $\<f_i:i\in\omega\>$ of  $Y$ for the whole of the argument.}  The `indiscernibility' of $Y$ alluded to above is formalized by the following definition and lemma.

\begin{Definition}  \label{respect} {\em  Given a permutation $\sigma\in Sym(\omega)$, a group element $g\in G$ {\em respects $\sigma$} if
$g\cdot [f_i]=[f_{\sigma(i)}]$ for every $i\in\omega$.}
\end{Definition}

\begin{Lemma} \label{Claim2}  For every permutation $\sigma\in Sym(\omega)$, there is some $g\in G$ respecting $\sigma$.
\end{Lemma}

\begin{proof}  Note that since $h$ is increasing, $h(n)\ge n$ for every $n\in\omega$.  Fix a permutation $\sigma\in Sym(\omega)$ and we will
define  some $g\in G$ respecting $\sigma$ coordinate-wise.  
Using Lemma~\ref{Claim1}, choose a sequence
$$0=N_0\ll N_1\ll N_2\ll \dots$$ of integers such that for all $i\in\omega$, both $f_i(n)\neq f_j(n)$ and $f_{\sigma(i)}(n)\neq f_{\sigma(j)}(n)$ hold for all $n\ge N_i$ and
all $j<i$.

Since $\{N_i\}$ are increasing, it follows that for each $i\in\omega$ and all $n\ge N_i$, the subsets $\{f_j(n):j\le i\}$ and $\{f_{\sigma(j)}(n):j\le i\}$ of $[h(n)]$
each have precisely $(i+1)$ elements.  Thus, for each $i<\omega$ and for each $n\ge N_i$, there is a permutation $\delta_n\in Sym([h(n)])$ satisfying
$$\bigwedge_{j\le i} \delta_n(f_j(n))=f_{\sigma(j)}(n)$$
[Simply begin defining $\delta_n$ to meet these constraints, and then complete $\delta_n$ to a permutation of $[h(n)]$ arbitrarily.]
Using this, define $g:=\<\delta_n:n\in\omega\>$, where  each $\delta_n\in Sym([h(n)])$ is constructed as  above.
To see that $g$ respects $\sigma$, note that for every $i\in\omega$, $(g\cdot f_i)(n)=f_{\sigma(i)}(n)$ for all $n\ge N_{i}$, so $(g\cdot f_i)\sim f_{\sigma(i)}$.
\end{proof}

\begin{Definition}  {\em  For distinct  integers $i\neq j$, let $d_{i,j}\in\B$ be defined by:
$$d_{i,j}(n):=\begin{cases} f_i(n) & \text{if $n$ even;}
 \\  f_j(n) & \text{if $n$ odd.}\end{cases}$$
 Let $Z:=\{d_{i,j}:i\neq j\}$.
}
\end{Definition}

Note that  $d_{i,j}\not\sim f_k$ for all distinct $i,j$ and all $k\in\omega$, hence  $\{[f_i]:i\in\omega\}$ and $\{[d_{i,j}]:i\neq j\}$ are disjoint.

\begin{Lemma}  \label{Claim3}
  For all $\sigma\in Sym(\omega)$, if $g\in G$ respects $\sigma$, then  $g\cdot [d_{i,j}]=[d_{\sigma(i),\sigma(j)}]$
for all  $i\neq j$.
\end{Lemma}

\begin{proof}  Choose $\sigma\in Sym(\omega)$, $g$ respecting $\sigma$, and $i\neq j$.  Choose $N$ such that  $(g\cdot [f_i])(n)=[f_{\sigma(i)}](n)$ and 
$(g\cdot [f_j])(n)=[f_{\sigma(j)}](n)$ for every $n\ge N$.
Since  $d_{i,j}(n)=f_i(n)$ for $n\ge N$ even, 
$$(g\cdot d_{i,j})(n)=\pi_n(g)(d_{i,j}(n))=\pi_n(g)(f_i(n))=(g\cdot f_i)(n)=f_{\sigma(i)}(n)$$
Dually, $(g\cdot d_{i,j})(n)=f_{\sigma(j)}(n)$ when $n\ge N$ is odd, so $(g\cdot d_{i,j})\sim d_{\sigma(i),\sigma(j)}$.
\end{proof}

With the combinatorial preliminaries out of the way, we now prove that $T_h$ is Borel complete.  
We form a highly homogeneous model $M^*\models T_h$ and thereafter, all models we consider will be countable, elementary substructures of $M^*$.
Let
$A=\{a_f:f\in\B\}$ and $B=\{b_f:f\in\B\}$ be disjoint sets and let
$M^*$ be the $L$-structure with universe $A\cup B$ and each $E_n$ interpreted by the rules:
\begin{itemize}
\item  For all $f\in\B$ and $n\in\omega$, $E_n(a_f,b_f)$; and
\item  For all $f,f'\in\B$ and $n\in\omega$, $E_n(a_f,a_{f'})$ iff $f(n)=f'(n)$.
\end{itemize}
with the other instances of $E_n$ following by symmetry and transitivity. 
For any finite $F\subseteq\omega$, $\{f\mr{F}:f\in\B\}$ has exactly $\Pi_{n\in F} h(n)$ elements,
hence $E_F(x,y):=\bigwedge_{n\in F} E_n(x,y)$ has $\Pi_{n\in F} h(n)$ classes in $M^*$.
Thus, the $\{E_n:n\in\omega\}$ cross cut and
$M^*\models T_h$.

Let $E_\infty(x,y)$ denote the (type definable) equivalence relation $\bigwedge_{n\in\omega} E_n(x,y)$.
Then, in $M^*$, $E_\infty$ partitions $M^*$ into 2-element classes $\{a_f,b_f\}$, indexed by $f\in\B$.
Note also that every $g\in G$ induces an $L$-automorphism $g^*\in Aut(M^*)$ by
$$g^*(x):=\begin{cases} a_{(g\cdot f)} & \text{if $x=a_f$ for some $f\in\B$}\\  b_{(g\cdot f)} & \text{if $x=b_f$ for some $f\in\B$}\end{cases}$$

Recall the set $Y=\{f_i:i\in\B\}$ from Lemma~\ref{Claim1}, so $[Y]=\{[f_i]:i\in\omega\}$.
Let $M_0\subseteq M^*$ be the substructure with universe $\{a_f:f\in [Y]\}$.  As $T_h$ admits elimination of quantifiers and as $[Y]$ is dense
in $\B$,  $M_0\preceq M^*$.
Moreover, every substructure $M$ of $M^*$ with universe containing $M_0$ will also be an elementary substructure of $M^*$, hence a model of $T_h$.

To show that $Mod(T_h)$ is Borel complete, we define a Borel mapping from $\{$irreflexive graphs $\G=(\omega,R)\}$ to $Mod(T_h)$ as follows:
Given $\G$, let $Z(R):=\{d_{i,j}\in Z:\G\models R(i,j)\}$, so $[Z(R)]=\bigcup\{[d_{i,j}]:d_{i,j}\in Z(R)\}$. Let
$M_G\preceq M^*$ be the substructure with universe
$$M_0\cup\{a_d,b_d:d\in [Z(R)]\}$$
That the map $\G\mapsto M_G$ is Borel is routine, given that $Y$ and $Z$ are fixed throughout.

Note that in $M_G$, every $E_\infty$-class has
has either one or two elements.  Specifically,
for each $d\in [Z(R)]$,  the $E_\infty$-class $[a_d]_\infty=\{a_d,b_d\}$,
while the $E_\infty$-class $[a_f]_\infty=\{a_f\}$ for every $f\in[Y]$.

We must show that for any two  graphs $\G=(\omega,R)$ and $\H=(\omega,S)$,
$\G$ and $\H$ are isomorphic if and only if the $L$-structures $M_G$ and $M_H$ are isomorphic. 

To verify this, first choose a graph isomorphism $\sigma:(\omega,R)\rightarrow (\omega,S)$.
Then $\sigma\in Sym(\omega)$ and, for distinct  integers $i\neq j$, $d_{i,j}\in Z(R)$ if and only if $d_{\sigma(i),\sigma(j)}\in Z(S)$.
Apply Lemma~\ref{Claim2}  to get $g\in G$ respecting $\sigma$ and let $g^*\in Aut(M^*)$ be the $L$-automorphism induced
by $g$.  By Lemma~\ref{Claim3} and Definition~\ref{respect}, it is easily checked that the restriction of $g^*$ to $M_G$
is an $L$-isomorphism between $M_G$ and $M_H$.

Conversely, assume that $\Psi:M_G\rightarrow M_H$ is an $L$-isomorphism.  Clearly, $\Psi$ maps $E_\infty$-classes in $M_G$ to $E_\infty$-classes in $M_H$.  In particular,
$\Psi$ permutes the 1-element $E_\infty$-classes $\{\{a_f\}:f\in [Y]\}$ of both $M_G$ and $M_H$, and maps  the 2-element $E_{\infty}$-classes $\{\{a_d,b_d\}:d\in [Z(R)]\}$  of $M_G$
onto the 2-element $E_{\infty}$-classes $\{\{a_d,b_d\}:d\in [Z(S)]\}$  of $M_H$.   That is, $\Psi$ induces a bijection $F:[Y\sqcup Z(R)]\rightarrow [Y\sqcup Z(S)]$ that permutes $[Y]$.

As well, by the interpretations of the $E_n$'s, for $f,f'\in [Y\sqcup Z(R)]$ and $n\in\omega$,
$$f(n)=f'(n)\quad\hbox{if and only if}\quad F(f)(n)=F(f')(n).$$
From this it follows that $F$ maps $\sim$-classes onto $\sim$-classes.  
As  $F$ permutes $[Y]$ and  as $[Y]=\bigcup\{[f_i]:i\in\omega\}$, $F$ induces a permutation $\sigma\in Sym(\omega)$ given by 
$\sigma(i)$ is the unique $i^*\in\omega$ such that $F([f_i])=[f_{i^*}]$.

We claim that this $\sigma$ induces a graph isomorphism between $\G=(\omega,R)$ and $\H=(\omega,S)$.  Indeed, choose any $(i,j)\in R$.
Thus, $d_{i,j}\in Z(R)$.  As $F$ is $\sim$-preserving, choose $N$ large enough so that $F(f_i)(n)=F(f_{\sigma(i)})(n)$ and
$F(f_j)(n)=F(f_{\sigma(j)})(n)$ for every $n\ge N$.  By definition of $d_{i,j}$,  $d_{i,j}(n)=f_i(n)$ for $n\ge N$ even, so
$F(d_{i,j})(n)=F(f_i)(n)=f_{\sigma(i)}(n)$ for such $n$.  Dually, for $n\ge N$ odd,  $F(d_{i,j})(n)=F(f_j)(n)=f_{\sigma(j)}(n)$.
Hence, $F(d_{i,j})\sim d_{\sigma(i),\sigma(j)}\in [Z(S)]$.  Thus, $(\sigma(i),\sigma(j))\in S$.  The converse direction is symmetric 
(i.e., use $\Psi^{-1}$ in place of $\Psi$ and run the same argument).
\end{proof}

\begin{Remark}  {\em If we relax the assumption that $h:\omega\rightarrow\omega\setminus\{0\}$ is strictly increasing, there are two cases.
If $h$ is unbounded, then the proof given above can easily be modified to show that the associated $T_h$ is also Borel complete.  Conversely, with Theorem~6.2 of
 \cite{LU} the authors
prove that if
$h:\omega\rightarrow\omega\setminus\{0\}$ is bounded, then $T_h$ is not Borel complete.   
The salient  distinction between the two cases is that when $h$ is bounded,  the associated group $G$ has bounded exponent.
However, even in the bounded case
$T_h$ has a Borel complete reduct by Lemma~\ref{cross} below.
}
\end{Remark}

\section{Applications to reducts}

We begin with one easy lemma that, when considering reducts, obviates the need for the number of classes to be strictly increasing.

\begin{Lemma}  \label{cross}  Let $L=\{E_n:n\in\omega\}$ and let  $f:\omega\rightarrow\omega\setminus\{0,1\}$ be any function.
Then every model $M$ of $T_f$,  the complete  theory asserting that each $E_n$ is an equivalence relation with $f(n)$  classes, and that the $\{E_n\}$ cross-cut,
has a Borel complete reduct.
\end{Lemma}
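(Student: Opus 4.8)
The plan is to reduce to Theorem~\ref{big} by grouping the given equivalence relations $\{E_n\}$ into consecutive finite blocks, so that the resulting conjunctions form a cross-cutting family whose numbers of classes are strictly increasing. Since $f(n)\ge 2$ for every $n$, finite products of the values $f(n)$ grow without bound, and this is precisely what lets us manufacture a strictly increasing class-count out of a possibly bounded $f$.

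Concretely, I would partition $\omega$ into consecutive finite intervals $F_0,F_1,F_2,\dots$ chosen greedily: set $F_0=\{0\}$, and having chosen $F_0,\dots,F_{k-1}$ covering an initial segment $\{0,\dots,m-1\}$, let $F_k=\{m,\dots,m'\}$ where $m'$ is least with $\prod_{n\in F_k}f(n) > \prod_{n\in F_{k-1}}f(n)$. Such an $m'$ exists because each factor is at least $2$, so the partial products are unbounded, and the intervals exhaust $\omega$. Put $h(k):=\prod_{n\in F_k}f(n)$, a strictly increasing function $h:\omega\rightarrow\omega\setminus\{0\}$. For each $k$ define the $L$-formula $E'_k(x,y):=\bigwedge_{n\in F_k}E_n(x,y)$, and let $M'$ be the reduct of $M$ to $L'=\{E'_k:k\in\omega\}$ with each $E'_k$ interpreted by this $0$-definable formula. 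By Definition~\ref{reduct}, $M'$ is a reduct of $M$.

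Next I would check that $M'\models T_h$. Each $E'_k$ is an equivalence relation with exactly $h(k)$ classes, being the intersection over the block $F_k$ of the cross-cutting $E_n$. For the cross-cutting of the $E'_k$: for any finite nonempty $G\subseteq\omega$, regrouping conjunctions gives $\bigwedge_{k\in G}E'_k(x,y)=\bigwedge_{n\in\bigcup_{k\in G}F_k}E_n(x,y)$, which—since the $E_n$ cross-cut and the blocks are pairwise disjoint—has exactly $\prod_{n\in\bigcup_{k\in G}F_k}f(n)=\prod_{k\in G}h(k)$ classes. Hence the $E'_k$ cross-cut with the prescribed numbers of classes; these are precisely the axioms of $T_h$, which is complete, so $Th(M')=T_h$. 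As $h$ is strictly increasing, Theorem~\ref{big} shows $T_h$ is Borel complete, and therefore $M'$ is the desired Borel complete reduct of $M$.

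The content here is essentially bookkeeping: the only point requiring genuine care is that cross-cutting is preserved under regrouping, and this follows at once from the associativity of conjunction together with the disjointness of the blocks $F_k$. The hypothesis $f(n)\ge 2$ (rather than merely $f(n)\ge 1$) is exactly what forces the block products strictly upward; I do not expect any real obstacle beyond verifying these routine facts.
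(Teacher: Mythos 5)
Your proposal is correct and follows essentially the same route as the paper's proof: partition $\omega$ into finite blocks with strictly increasing products of $f$-values, take the block conjunctions $E'_k$ as a cross-cutting family with class counts given by a strictly increasing $h$, and invoke Theorem~\ref{big}. The only difference is that you spell out the greedy choice of blocks and the regrouping argument for cross-cutting, which the paper leaves implicit.
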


\begin{proof}  Given any function $f:\omega\rightarrow\omega\setminus\{0,1\}$, choose
 a partition $\omega=\bigsqcup\{F_n:n\in\omega\}$ into non-empty finite sets for which  $\Pi_{k\in F_n} f(k)< \Pi_{k\in F_m} f(k)$ whenever $n<m<\omega$.
 For each $n$, let $h(n):=\Pi_{k\in F_n} f(k)$ and let $E^*_n(x,y):=\bigwedge_{k\in F_n} E_k(x,y)$.  Then, as $h$ is strictly increasing
 and $\{E^*_n\}$ is a cross-cutting set of equivalence relations with each $E_n^*$ having $h(n)$ classes.  
 
 Now let $M\models T_f$ be arbitrary and let $L'=\{E^*_n:n\in\omega\}$. As each $E_n^*$ described above is 0-definable in $M$, 
 there is an $L'$-reduct $M'$ of $M$.  It follows from Theorem~\ref{big} that $T'=Th(M')$ is Borel complete, so $T_f$ has a Borel complete reduct.
\end{proof}

\begin{Theorem}  \label{1types} Suppose $T$ is a complete theory in a countable language with uncountably many 1-types.
Then every model $M$ of $T$ has a Borel complete reduct.
\end{Theorem}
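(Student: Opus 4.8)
The plan is to reduce everything to Lemma~\ref{cross}. It suffices to produce, inside the given model $M$, a family $\{E_n:n\in\omega\}$ of $0$-definable equivalence relations that cross-cut and each have exactly two classes. For then the reduct $M'$ of $M$ to $L'=\{E_n:n\in\omega\}$ is a model of $T_f$ with $f\equiv 2$, Lemma~\ref{cross} supplies $M'$ with a Borel complete reduct $M''$, and since a reduct of a reduct is again a reduct (the interpretation in $M''$ of any $L''$-formula is $0$-definable in $M'$, hence, by the remark following Definition~\ref{reduct}, $0$-definable in $M$), the structure $M''$ is a Borel complete reduct of $M$.

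To build the $E_n$, I would first extract a binary tree of formulas. The Stone space $S_1(T)$ of complete $1$-types is compact and metrizable (as $L$ is countable) and, by hypothesis, uncountable; so by the Cantor--Bendixson theorem it contains a nonempty perfect set. Fitting a Cantor scheme of basic clopen sets into this perfect set yields consistent formulas $\{\phi_s(x):s\in 2^{<\omega}\}$ in the single free variable $x$ (over $\emptyset$) such that $\phi_{s0}\vdash\phi_s$ and $\phi_{s1}\vdash\phi_s$, while $\phi_{s0}\wedge\phi_{s1}$ is inconsistent. In particular, the formulas at any fixed level $2^n$ are pairwise inconsistent, and for each branch $\eta\in 2^\omega$ the chain $\{\phi_{\eta\myrestriction k}:k\in\omega\}$ is a descending sequence of consistent formulas, hence a consistent partial type.

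Next, for each $n$ I would set $U_n(x):=\bigvee\{\phi_s(x):s\in 2^{n+1},\ s(n)=1\}$ and let $E_n(x,y):=(U_n(x)\leftrightarrow U_n(y))$, a $0$-definable equivalence relation with at most two classes. The heart of the argument is that these cross-cut in \emph{every} model. Fix a finite $F\subseteq\omega$ and a function $\epsilon\colon F\to\{0,1\}$, and let $\theta_\epsilon(x)$ be the conjunction of $U_n(x)$ over those $n\in F$ with $\epsilon(n)=1$ together with $\neg U_n(x)$ over those $n\in F$ with $\epsilon(n)=0$. Extending $\epsilon$ to some $\eta\in 2^\omega$ and using pairwise inconsistency at each level, one checks that any type extending $\{\phi_{\eta\myrestriction k}:k\in\omega\}$ contains $U_n$ precisely when $\eta(n)=1$, so $\theta_\epsilon$ is consistent with $T$. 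Since $T$ is complete, $T\vdash\exists x\,\theta_\epsilon(x)$, and hence $\theta_\epsilon$ is realized in every model of $T$. Thus in any $M\models T$ all $2^{|F|}$ patterns occur, meaning $E_F=\bigwedge_{n\in F}E_n$ has exactly $2^{|F|}$ classes and each $E_n$ has exactly two; so $M'$ is indeed a model of $T_f$ with $f\equiv 2$, completing the reduction.

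The step I expect to demand the most care is precisely this last one: an arbitrary $M\models T$ may be small and realize very few types, so cross-cutting is by no means automatic from the mere existence of the perfect set. The device that rescues the argument is that each pattern $\theta_\epsilon$ is not only consistent but, by completeness of $T$, is \emph{entailed} to be realized, so the cross-cutting holds uniformly across all models of $T$ at once. Everything else---the extraction of the Cantor scheme and the bookkeeping showing that $U_n$ reads off the $n$-th coordinate of a branch---is routine.
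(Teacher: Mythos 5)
Your proposal is correct and takes essentially the same route as the paper: both extract a binary tree of consistent, pairwise-contradictory formulas from the Cantor--Bendixson analysis of $S_1(T)$, convert it into $0$-definable equivalence relations with exactly two classes that cross-cut in \emph{every} model (using completeness of $T$ to guarantee every consistent pattern is realized), and then conclude via Lemma~\ref{cross} and the fact that reducts of reducts are reducts. The only difference is cosmetic: the paper first enlarges the tree so that each level covers the universe and defines $E_n$ from the formulas $\delta^0_n,\delta^1_n$, whereas you skip the coverage step by letting $U_n$ be the disjunction of the level-$(n+1)$ formulas whose last coordinate is $1$ and putting the unclassified elements into the $\neg U_n$ class---both devices yield the same cross-cutting family.
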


\begin{proof}  Let $M\models T$ be arbitrary.
As usual, by the Cantor-Bendixon analysis of the compact, Hausdorff Stone space $S_1(T)$ of complete 1-types, choose a set
$\{\phi_\eta(x):\eta\in 2^{<\omega}\}$ of 0-definable formulas, indexed by the tree $(2^{<\omega},\trianglelefteq)$ ordered by initial segment,
satisfying:
\begin{enumerate}
\item  $M\models\exists x\phi_\eta(x)$ for each $\eta\in 2^{<\omega}$;
\item  For $\nu\trianglelefteq\eta$, $M\models\forall x(\phi_\eta(x)\rightarrow\phi_\nu(x))$;
\item  For each $n\in\omega$, $\{\phi_\eta(x):\eta\in 2^n\}$ are pairwise contradictory.
\end{enumerate}
By increasing these formulas slightly, we can additionally require
\begin{enumerate}\setcounter{enumi}{3}
\item  For each $n\in\omega$, $M\models\forall x( \bigvee_{\eta\in 2^n} \phi_\eta(x))$.
\end{enumerate}

Given such a tree of formulas, for each $n\in\omega$, define
$$\delta^0_n(x):=\bigwedge_{\eta\in 2^n} [\phi_\eta(x)\rightarrow\phi_{\eta\conc 0}(x)]\quad \hbox{and}\quad \delta^1_n(x):=\bigwedge_{\eta\in 2^n} [\phi_\eta(x)\rightarrow\phi_{\eta\conc 1}(x)]$$
Because of (4) above, $M\models\forall x(\delta^0_n(x)\vee\delta^1_n(x))$ for each $n$.  Also, for each $n$, let
$$E_n(x,y):=[\delta_n^0(x)\leftrightarrow \delta_n^0(y)]$$
From the above, each $E_n$ is a 0-definable equivalence relation with precisely two classes.

\medskip
\noindent{\bf Claim.}  The equivalence relations $\{E_n:n\in\omega\}$ are cross-cutting.

\begin{proof}  It suffices to prove that for every $m>0$,  the equivalence relation $E^*_m(x,y):=\bigwedge_{n<m} E_n(x,y)$ has $2^m$ classes.
So fix $m$ and choose a subset $A_m=\{a_\eta:\eta\in 2^m\}\subseteq M$ forming a set of representatives for the formulas $\{\phi_\eta(x):\eta\in 2^m\}$.
It suffices to show that $M\models\neg E^*_m(a_\eta,a_\nu)$ whenever $\eta\neq \nu$ are from $2^m$.
But this is clear.  Fix distinct $\eta\neq\nu$ and choose  any $k<m$   such that $\eta(k)\neq \nu(k)$.  Then $M\models \neg E_k(a_\eta,a_\nu)$, hence
$M\models\neg E^*_m(a_\eta,a_\nu)$. 
\end{proof}

Thus,  taking the 0-definable relations $\{E_n\}$, $M$ has a reduct that is a model of $T_f$ (where $f$ is the constant function 2).
As reducts of reducts are reducts, it follows from Lemma~\ref{cross} and Theorem~\ref{big} that $M$ has a Borel complete reduct.
\end{proof}

We highlight how unexpected Theorem~\ref{1types} is with two examples.
First, the theory of `Independent unary predicates' mentioned in the Introduction has a Borel complete reduct.

Next, we explore the assumption that a countable, complete theory $T$ is not small, i.e., for some $k$ there are uncountably many $k$-types.
We conjecture that some model of $T$ has a Borel complete reduct.
If $k=1$, then by Theorem~\ref{1types}, every model of $T$ has a Borel complete reduct.  If $k>1$ is least, then it is easily seen that there is some complete
$(k-1)$ type $p(x_1,\dots,x_{k-1})$ with uncountably many complete $q(x_1,\dots,x_k)$ extending $p$.  Thus, if $M$ is any model of $T$ realizing $p$, say by $\abar=(a_1,\dots,a_{k-1})$,
the expansion $(M,a_1,\dots,a_{k-1})$ has a Borel complete reduct, also by Theorem~\ref{1types}.   Similarly, we have the following result.

\begin{Corollary}  \label{small}  Suppose $T$ is a complete theory in a countable language that is not small.
Then for any model $M$ of $T$, $M^{eq}$ has a Borel complete reduct.
\end{Corollary}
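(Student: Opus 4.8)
The plan is to reduce the non-small case with some $k$-types uncountable (for $k>1$) to the $1$-type case already handled in Theorem~\ref{1types}, but working inside $M^{eq}$ rather than an expansion by parameters. The key observation is that when $T$ is not small, there is a least $k$ for which $S_k(T)$ is uncountable, and as noted in the discussion preceding the corollary, there is a complete $(k-1)$-type $p(x_1,\dots,x_{k-1})$ with uncountably many complete extensions $q(x_1,\dots,x_k)\supseteq p$. The difficulty, compared to the remark preceding the corollary, is that we are not permitted to name parameters realizing $p$: a reduct of $(M,\abar)$ need not correspond to a reduct of $M^{eq}$, since naming $\abar$ is not a $0$-definable operation. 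The entire point of passing to $M^{eq}$ is that it supplies enough $0$-definable structure to simulate this naming.

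The main step is therefore to manufacture, purely $0$-definably in $M^{eq}$, a family of $0$-definable equivalence relations that cross-cut and each have exactly two classes, so that Lemma~\ref{cross} and Theorem~\ref{big} apply. First I would fix a formula $\psi(\bar{x})$ isolating (or at least cutting out) the set of realizations of $p$, and consider the definable set $P=\psi(M)^{k-1}$ together with the $0$-definable equivalence relation on $P\times M$ given by $\tp(\bar{a}a)=\tp(\bar{b}b)$ restricted to the relevant fibers. More usefully, I would run a Cantor--Bendixson analysis on the Stone space of extensions of $p$, exactly as in the proof of Theorem~\ref{1types}, to obtain a tree $\{\phi_\eta(\bar{x},y):\eta\in 2^{<\omega}\}$ of formulas (now with the parameter-block $\bar{x}$ ranging over $P$) satisfying the analogues of conditions (1)--(4). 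The crucial adaptation is that each $\phi_\eta(\bar x,y)$ defines, fiberwise over each realization $\bar a$ of $p$, a piece of the type space, and the splitting formulas $\delta^0_n(\bar x,y),\delta^1_n(\bar x,y)$ then yield relations $E_n$ on pairs $(\bar a,b)$ that are $0$-definable in $M$ and cross-cut on each fiber.

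To land these relations in $M^{eq}$ as genuine two-class equivalence relations, I would pass to the $0$-definable sort whose elements code pairs $(\bar a, b)$ modulo the equivalence $\bar a'=\bar a$ together with $E_\infty$-equivalence of the $b$-coordinate over $\bar a$, or more simply work on the sort of pairs and observe that each $E_n$ descends to a $0$-definable two-class equivalence relation on an appropriate imaginary sort, cross-cutting by the same argument as in the Claim in Theorem~\ref{1types} (choose representatives $a_\eta$ for $2^m$ and use a coordinate $k$ where $\eta(k)\neq\nu(k)$). Granting this, the reduct of $M^{eq}$ to $\{E_n:n\in\omega\}$ is a model of $T_f$ for the constant function $f\equiv 2$, so Lemma~\ref{cross} gives a Borel complete reduct of that reduct, and since reducts of reducts are reducts, $M^{eq}$ has a Borel complete reduct.

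The step I expect to be the main obstacle is the second paragraph's passage: ensuring that the fiberwise type-splitting genuinely produces $0$-definable equivalence relations \emph{on a single imaginary sort of $M^{eq}$} that cross-cut globally, rather than merely fiber-by-fiber over each realization of $p$. The naive product $P\times M$ carries the parameter $\bar a$ as a free variable, and one must check that the resulting relations $E_n(\bar x y,\bar x' y')$ are honestly equivalence relations and cross-cut when $\bar x,\bar x'$ are allowed to vary, not just when they are equal. I expect this is handled by restricting to the imaginary sort where the $\bar x$-coordinate has been quotiented out (so that all pairs share a common "fiber"), reducing the global cross-cutting to the fiberwise statement, which is exactly the content of Theorem~\ref{1types} applied to the uncountable family of $q$'s over $p$.
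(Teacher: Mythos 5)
There is a genuine gap, and it sits exactly where you flagged it. Your proposed repair of the fiberwise construction does not work: the set of realizations of the $(k-1)$-type $p$ is in general only type-definable, not $0$-definable, so no reduct of $M^{eq}$ can ``restrict'' to it; the relation you want to quotient by (``$E_\infty$-equivalence of the $b$-coordinate over $\bar a$'') is an infinite conjunction of formulas, hence not a $0$-definable equivalence relation and not an imaginary sort of $M^{eq}$ at all; and ``applying Theorem~\ref{1types} to the uncountable family of $q$'s over $p$'' means applying it to a theory over the parameters $\bar a$, which is precisely the naming of parameters you correctly identified as forbidden --- so the fix is circular. More fundamentally, your framing misidentifies what $M^{eq}$ buys you. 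It does \emph{not} ``simulate naming parameters''; it cannot. What it does is turn tuples into elements: in $L^{eq}$ there is a sort $U_k$ together with a $0$-definable bijection $M^k\rightarrow U_k$.

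That observation is the paper's entire proof, and it makes all of your machinery unnecessary. Choose $k$ with $|S_k(T)|>\aleph_0$ (leastness is irrelevant, and the type $p$ never enters). Via the bijection $M^k\rightarrow U_k$, every complete $k$-type of $T$ becomes a complete $1$-type of $Th(M^{eq})$ concentrating on the sort $U_k$; hence $Th(M^{eq})$ is itself a complete theory in a countable language with uncountably many $1$-types, and Theorem~\ref{1types} applies verbatim to the model $M^{eq}$. If you insist on a version closer to your construction, the correct move is not to fiber over $p$ but to run the Cantor--Bendixson analysis directly on the uncountable space $S_k(T)$ with honest $k$-ary formulas $\phi_\eta(x_1,\dots,x_k)$: the resulting $E_n$'s are then global $0$-definable equivalence relations on $k$-tuples (equivalently, on the sort $U_k$), each with two classes, and cross-cutting holds in \emph{every} model of $T$ because each instance of cross-cutting is a single first-order sentence true in some, hence (by completeness) all, models. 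But that is just re-proving Theorem~\ref{1types} inside $M^{eq}$ rather than quoting it.
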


\begin{proof}  Let $M$ be any model of $T$ and choose $k$ least such that $T$ has uncountably many complete $k$-types consistent with it.  
In the language $L^{eq}$, there is a sort $U_k$ and a definable bijection  $f:M^k\rightarrow U_k$.  Hence $Th(M^{eq})$ has uncountably many 1-types
consistent with it, each extending $U_k$.  Thus, $M^{eq}$ has a Borel complete reduct by Theorem~\ref{1types}.
\end{proof}

Finally, recall that a countable, complete theory is not $\omega$-stable if, for some countable model $M$ of $T$, the Stone space $S_1(M)$ is uncountable.
From this, we immediately obtain our final corollary.

\begin{Corollary}  \label{omegastable}  If a countable, complete $T$ is not $\omega$-stable, then for some countable model $M$ of $T$, the elementary diagram of
$M$ in the language $L(M)=L\cup\{c_m:m\in M\}$ has a Borel complete reduct.
\end{Corollary}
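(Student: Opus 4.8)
The plan is to reduce the statement immediately to Theorem~\ref{1types} by treating the elementary diagram as a complete theory in its own right. First I would unpack the hypothesis: since $T$ is not $\omega$-stable, by the characterization recalled just above the corollary there is a countable model $M\models T$ for which the Stone space $S_1(M)$ is uncountable. Fix such an $M$. Because $M$ is countable and $L$ is countable, the expanded language $L(M)=L\cup\{c_m:m\in M\}$ is again countable, and the elementary diagram $T^*:=Th(M_{L(M)})$ — where $M_{L(M)}$ denotes $M$ with each $c_m$ interpreted as $m$ — is a complete theory in this countable language.

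The key observation is that $T^*$ has uncountably many complete $1$-types. I would verify this via the standard bijection between $S_1(T^*)$, the space of complete $1$-types of $T^*$ over $\emptyset$ in the language $L(M)$, and $S_1(M)$, the space of complete $1$-types over the parameter set $M$ in the language $L$. Under this identification an $L$-formula $\varphi(x,\bar a)$ with parameters $\bar a$ from $M$ corresponds to the $L(M)$-formula $\varphi(x,\bar c)$ obtained by replacing each parameter $a_i$ by its constant $c_{a_i}$; since the only nonlogical symbols of $L(M)$ beyond $L$ are these constants, every $L(M)$-formula in one free variable is $T^*$-equivalent to one of this shape, and the translation preserves consistency because $M_{L(M)}$ decides each $L(M)$-sentence exactly as $M$ decides the corresponding $L$-formula with parameters. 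Hence $|S_1(T^*)|=|S_1(M)|$ is uncountable.

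With this in hand the corollary follows at once. Applying Theorem~\ref{1types} to the complete theory $T^*$ in the countable language $L(M)$, I conclude that every model of $T^*$ has a Borel complete reduct; in particular the distinguished model $M_{L(M)}$ itself, whose theory is precisely the elementary diagram of $M$, has a Borel complete reduct.

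I expect no genuine obstacle here: the only point demanding care is the identification of $S_1(T^*)$ with $S_1(M)$, which is the routine fact that naming each parameter by a constant converts $1$-types over a set into $1$-types of the expanded complete theory over the empty set. Everything else is a direct invocation of Theorem~\ref{1types}, so the work is entirely in setting up the correct complete theory to which that theorem applies.
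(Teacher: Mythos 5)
Your proposal is correct and is essentially the paper's own proof: fix a countable $M$ with $S_1(M)$ uncountable, note that the elementary diagram is a complete theory in the countable language $L(M)$ with uncountably many 1-types, and apply Theorem~\ref{1types}. The only difference is that you spell out the routine identification of $S_1(Th(M_{L(M)}))$ with $S_1(M)$, which the paper leaves implicit.
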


\begin{proof}  Choose a countable $M$ so that $S_1(M)$ is uncountable.  Then, in the language $L(M)$, the theory of the expanded structure $M_M$ in the language $L(M)$
has uncountably many 1-types, hence it has a Borel complete reduct by Theorem~\ref{1types}.
\end{proof}

The results above are by no means characterizations.  Indeed, there are many Borel complete $\omega$-stable theories.  In \cite{LSh}, the first author and Shelah
prove that any $\omega$-stable theory that has eni-DOP or is eni-deep is not only Borel complete, but also $\lambda$-Borel complete for all $\lambda$.\footnote{Definitions of 
eni-DOP and eni-deep are given in  Definitions~2.3 and 6.2, respectively, of \cite{LSh}, and
the definition of $\lambda$-Borel complete is recalled in Section~\ref{obs} of this paper.}
As well, there are $\omega$-stable theories with only countably many countable models that have Borel complete reducts.  To illustrate this, we introduce three interrelated
theories.  The first, $T_0$ in the language $L_0=\{U,V,W,R\}$ is the paradigmatic DOP theory.  $T_0$ asserts that:
\begin{itemize}
\item   $U,V,W$ partition the universe; 
\item $R\subseteq U\times V\times W$;
\item  $T_0\models \forall x\forall y\exists^\infty z R(x,y,z)$; [more formally, for each $n$, $T_0\models \forall x\forall y\exists^{\ge n} z R(x,y,z)$]; 
\item  $T_0\models \forall x\forall x'\forall y\forall y'\forall z [R(x,y,z)\wedge R(x',y',z)\rightarrow(x=x'\wedge y=y')]$.
\end{itemize}
$T_0$ is both $\omega$-stable and $\omega$-categorical and its unique countable model is rather tame.   
The  complexity of $T_0$ is only witnessed with uncountable models, where one can code arbitrary  bipartite graphs
in an uncountable model  $M$ by choosing the cardinalities of the sets $R(a,b,M)$ among $(a,b)\in U\times V$ to be either $\aleph_0$ or $|M|$.

To get bad behavior of countable models, we expand $T_0$ to an $L=L_0\cup\{f_n:n\in\omega\}$-theory $T\supseteq T_0$ that additionally asserts:
\begin{itemize}
\item  Each $f_n:U\times V\rightarrow W$;
\item  $\forall x\forall y R(x,y,f_n(x,y))$ for each $n$; and
\item  for distinct $n\neq m$, $\forall x\forall y (f_n(x,y)\neq f_m(x,y))$.
\end{itemize}
This $T$ is $\omega$-stable with eni-DOP and hence is Borel complete by Theorem~4.12 of \cite{LSh}.

However, $T$ has an expansion $T^*$ in a language $L^*:=L\cup\{c,d,g,h\}$ whose models are much better behaved.
Let $T^*$ additionally assert:
\begin{itemize}
\item  $U(c)\wedge V(d)$;
\item  $g:U\rightarrow V$ is a bijection with $g(c)=d$;  
\item  Letting $W^*:=\{z:R(c,d,z)\}$, $h:U\times V\times W^*\rightarrow W$ is an injective map that is the identity on $W^*$ and, for each $(x,y)\in U\times V$,
 maps $W^*$ onto $\{z\in W: R(x,y,z)\}$; and moreover
 \item  $h$ commutes with each $f_n$, i.e., $\forall x\forall y (h(x,y,f_n(c,d))=f_n(x,y))$.
 \end{itemize}
 Then $T^*$ is $\omega$-stable and two-dimensional (the dimensions being $|U|$ and $|W^*\setminus\{f_n(c,d):n\in\omega\}|$), hence $T^*$ has only countably many countable models.
 However, $T^*$ visibly has a Borel complete reduct, namely $T$.

\section{Observations about the theories $T_h$}   \label{obs}

In addition to their utility in proving Borel complete reducts, the theories $T_h$ in Section 2 illustrate some novel behaviors.
First off, model theoretically, these theories are extremely simple.  More precisely, each  theory $T_h$ is  weakly minimal with the geometry of every strong type trivial
(such theories are known as mutually algebraic in \cite{MA}). 

Additionally, the theories $T_h$ are the simplest known examples of theories that are Borel complete, but not $\lambda$-Borel complete for all cardinals $\lambda$.
For $\lambda$ any infinite cardinal, $\lambda$-Borel completeness was introduced in \cite{LSh}.  Instead of looking at $L$-structures with universe $\omega$, we consider
$X_L^\lambda$, the set of $L$-structures with universe $\lambda$.  We topologize $X_L^\lambda$ analogously; namely a basis consists of all sets
$$U_{\phi(\alpha_1,\dots,\alpha_n)}:=\{M\in X_L^\lambda:M\models \phi(\alpha_1,\dots,\alpha_n)\}$$
for all $L$-formulas $\phi(x_1,\dots,x_n)$ and all $(\alpha_1,\dots,\alpha_n)\in\lambda^n$.  Define a subset of $X_L^\lambda$ to be {\em $\lambda$-Borel} if it is is the smallest
$\lambda^+$-algebra containing the basic open sets, and call a function $f:X_{L_1}^\lambda\rightarrow X_{L_2}^\lambda$ to be {\em $\lambda$-Borel} if the inverse image of 
every basic open set is $\lambda$-Borel.  For $T,S$ theories in languages $L_1,L_2$, respectively we say that $\Mod_\lambda(T)$ is {\em $\lambda$-Borel reducible} to
$\Mod_\lambda(S)$ if there is a $\lambda$-Borel $f:\Mod_\lambda(T)\rightarrow \Mod_\lambda(S)$ preserving back-and-forth equivalence in both directions (i.e.,
$M\equiv_{\infty,\omega} N\Leftrightarrow f(M)\equiv_{\infty,\omega} f(N)$).  

As back-and-forth equivalence is the same as isomorphism for countable structures, $\lambda$-Borel reducibility when $\lambda=\omega$ is identical to Borel reducibility.
As before, for any infinite $\lambda$, there is a maximal class under $\lambda$-Borel reducibility, and we say a theory is {\em $\lambda$-Borel complete} if it is in this maximal class.
All of the `classical' Borel complete theories, e.g., graphs, linear orders, groups, and fields, are $\lambda$-Borel complete for all $\lambda$.  However, the theories $T_h$ are not.

\begin{Lemma}  If $T$ is mutually algebraic in a countable language, then there are at most $\beth_2$ pairwise $\equiv_{\infty,\omega}$-inequivalent models (of any size).
\end{Lemma}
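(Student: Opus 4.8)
The plan is to exploit the structure theory of mutually algebraic theories which, as recalled in the excerpt and developed in \cite{MA}, guarantees that $T$ is weakly minimal with every strong type of trivial geometry. Under this hypothesis a model ought to be completely determined, up to $\equiv_{\infty,\omega}$, by a coarse ``dimension'' invariant recording, for each strong type over $\acl^{eq}(\emptyset)$, how many times it is realized. First I would fix notation: let $\mathcal P$ denote the set of complete strong types over $\acl^{eq}(\emptyset)$, equivalently the types obtained by choosing, compatibly, one class from each $\emptyset$-definable equivalence relation with finitely many classes (for $T_h$ these choices run over $\prod_n[h(n)]$). For a model $M$ and $p\in\mathcal P$ write $\dim_p(M)$ for the cardinality of the realization set $p(M)$ in $M$. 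The target invariant attaches to $M$ the function $\bar\chi_M\colon\mathcal P\to\omega\cup\{\infty\}$ given by $\bar\chi_M(p)=\dim_p(M)$ when this is finite and $\bar\chi_M(p)=\infty$ otherwise.

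The counting step is then pure cardinal arithmetic. Since the language is countable and $T$ is weakly minimal (hence superstable), $\acl^{eq}(\emptyset)$ is countable, so $|\mathcal P|\le 2^{\aleph_0}$, being bounded by the number of types over a countable set. The number of functions from $\mathcal P$ into the countable set $\omega\cup\{\infty\}$ is therefore at most $\aleph_0^{2^{\aleph_0}}=2^{2^{\aleph_0}}=\beth_2$. Moreover, two realizing configurations that differ only by a permutation of the $\emptyset$-definable equivalence classes yield $\equiv_{\infty,\omega}$-equivalent models, so passing to the quotient of $\mathcal P$ by the induced action of $\mathrm{Aut}(\acl^{eq}(\emptyset))$ can only decrease the count; the set of possible invariants still has size at most $\beth_2$. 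It thus suffices to show that $\bar\chi_M$ \emph{determines} $M$ up to $\equiv_{\infty,\omega}$: if $\bar\chi_M$ and $\bar\chi_N$ agree up to this group action, then $M\equiv_{\infty,\omega}N$.

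The heart of the argument, and the step I expect to be the main obstacle, is this determination direction, which I would establish by constructing a back-and-forth system witnessing $M\equiv_{\infty,\omega}N$ (a Karp / Ehrenfeucht--Fra\"iss\'e strategy of length $\omega$). Triviality of the geometry is exactly what makes this feasible: for each $p\in\mathcal P$ the set $p(M)$ carries no definable structure beyond equality, and distinct strong types are realized independently, so $M$ behaves like the disjoint amalgam over $\acl^{eq}(\emptyset)$ of the pure sets $p(M)$. Assuming $\bar\chi_M=\bar\chi_N$, I fix for each $p$ a bijection $p(M)\to p(N)$ when $\dim_p$ is finite and an injection with infinite domain and range when $\dim_p=\infty$, and then play the game by extending any finite partial matching one point at a time, always respecting strong types. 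The only way the second player can get stuck is by exhausting a single strong type, and this is prevented precisely by matching infinite dimensions to infinite dimensions (so $\equiv_{\infty,\omega}$ cannot distinguish, say, $\aleph_0$ from $\aleph_1$ realizations) while matching finite dimensions exactly. Checking that triviality permits these per-type matchings to be glued into one coherent partial-isomorphism family---in particular that adding a single element to the domain never forces a clash across types---is the technical crux; once it is in place, the resulting length-$\omega$ back-and-forth system gives $M\equiv_{\infty,\omega}N$ and completes the $\beth_2$ bound.
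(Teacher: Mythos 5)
Your counting step is fine, but the ``determination'' step---which you rightly flag as the crux---fails for the invariant as you have defined it, and the failure is not just technical. Take $T=Th(\mathbb{Z},succ)$, the theory of a successor bijection with no cycles: it is mutually algebraic (weakly minimal with trivial geometry), $\acl^{eq}(\emptyset)$ carries no useful structure, and there is a unique nonalgebraic strong type $p$, which is realized by \emph{every} element of \emph{every} model. Hence $\bar\chi_M\equiv\infty$ for all models $M$, and the group action on $\mathcal P$ is trivial; yet the models consisting of $1,2,3,\dots$ disjoint $\mathbb{Z}$-chains are countable and pairwise non-isomorphic, hence pairwise $\equiv_{\infty,\omega}$-inequivalent. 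Two things go wrong. First, ``cardinality of the realization set $p(M)$'' is the wrong count: realizations of a single strong type can be interalgebraic, so the meaningful quantity is the number of interalgebraicity components (equivalently, the size of a maximal independent set of realizations), which in this example is the number of chains. Second, your structural picture---that $p(M)$ ``carries no definable structure beyond equality'' and $M$ is a disjoint amalgam of pure sets---is false: here $p(M)$ carries the successor structure, and in general $\acl(a)$ can be a countably infinite set with rich internal structure. Consequently a one-point-at-a-time back-and-forth on realizations of strong types cannot work; any correct version must move whole components at once, and the fact that components can be matched freely (permutations of isomorphic components extend to automorphisms) is not a consequence of triviality alone but is precisely Proposition~4.4 of \cite{MA}. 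An incomplete invariant yields no bound on the number of classes, so as written the proof does not establish the lemma.

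For comparison, the paper sidesteps invariants entirely: using that component decomposition over an arbitrary countable $M_0\preceq M$, it builds $N\preceq_{\infty,\omega}M$ of size at most $2^{\aleph_0}$ by keeping, for each isomorphism type of component, all copies if there are finitely many and exactly $\aleph_0$ copies otherwise; the bound $\beth_2$ then comes from counting isomorphism types of structures of size at most $2^{\aleph_0}$ in a countable language. If you replace ``realizations of $p$'' by ``components over a countable elementary submodel'' and invoke \cite{MA} for the freeness of the decomposition, your argument essentially collapses into that proof; the dimension-style bookkeeping over $\acl^{eq}(\emptyset)$, together with the $\mathrm{Aut}(\acl^{eq}(\emptyset))$ quotient, is then unnecessary.
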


\begin{proof}  We show that every model $M$ has an $(\infty,\omega)$-elementary substructure of size $2^{\aleph_0}$, which suffices.  
So, fix $M$ and choose an arbitrary countable $M_0\preceq M$.  By Propositon~4.4 of \cite{MA},  $M\setminus M_0$  can be decomposed into countable components,
and any permutation of isomorphic components induces an automorphism of $M$ fixing $M_0$ pointwise.
As there are at most $2^{\aleph_0}$ non-isomorphic components over $M_0$, choose a substructure $N\subseteq M$ containing $M_0$ and, for each isomorphism type of
a component, $N$ contains either all of copies in $M$ (if there are only finitely many) or else precisely $\aleph_0$ copies if $M$ contains infinitely many copies.
It is easily checked that $N\preceq_{\infty,\omega} M$.
\end{proof}

\begin{Corollary}  No mutually algebraic theory $T$ in a countable language is $\lambda$-Borel complete for $\lambda\ge \beth_2$.  In particular, $T_h$ is Borel complete, but not $\lambda$-Borel complete for large $\lambda$.
\end{Corollary}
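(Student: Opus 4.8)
The plan is to read off the Corollary from the preceding Lemma, the point being that the \emph{number} of $\equiv_{\infty,\omega}$-classes is a monotone invariant of $\lambda$-Borel reducibility, and that the maximal class has more than $\beth_2$ such classes as soon as $\lambda\ge\beth_2$. First I would record the elementary observation that any $\lambda$-Borel reduction $f:\Mod_\lambda(S)\to\Mod_\lambda(T)$ is injective on $\equiv_{\infty,\omega}$-classes. This is immediate from its defining property $M\equiv_{\infty,\omega}N\Leftrightarrow f(M)\equiv_{\infty,\omega}f(N)$: the contrapositive of the forward implication says that inequivalent structures have inequivalent images, so distinct classes in the source go to distinct classes in the target. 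Consequently, whenever $S$ is $\lambda$-Borel reducible to $T$, the number of $\equiv_{\infty,\omega}$-classes among the size-$\lambda$ models of $S$ is at most the number among the size-$\lambda$ models of $T$.

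Next I would argue by contradiction. Suppose some mutually algebraic $T$ in a countable language were $\lambda$-Borel complete for some $\lambda\ge\beth_2$. By the preceding Lemma, $T$ has at most $\beth_2$ many $\equiv_{\infty,\omega}$-inequivalent models of any size, hence at most $\beth_2$ many among its models of size $\lambda$. On the other hand, the theory of linear orders is $\lambda$-Borel complete for every $\lambda$, as recalled earlier in this section; since $T$ lies in the maximal class, the theory of linear orders is therefore $\lambda$-Borel reducible to $T$. By the monotonicity in the previous paragraph, $\Mod_\lambda$ of the theory of linear orders then has at most $\beth_2$ many $\equiv_{\infty,\omega}$-classes.

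To contradict this I would exhibit strictly more than $\beth_2$ such classes. The ordinals $\alpha$ with $|\alpha|=\lambda$, namely those in the interval $[\lambda,\lambda^+)$, form a family of $\lambda^+$ pairwise non-isomorphic well-orders, each realizable as a linear order on the universe $\lambda$. Using the standard fact that distinct ordinals are never $\equiv_{\infty,\omega}$-equivalent, these yield $\lambda^+$ pairwise $\equiv_{\infty,\omega}$-inequivalent members of $\Mod_\lambda$ of the theory of linear orders. Since $\lambda\ge\beth_2$ forces $\lambda^+>\beth_2$, this is the desired contradiction, so no mutually algebraic $T$ is $\lambda$-Borel complete for $\lambda\ge\beth_2$. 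The concluding assertion about $T_h$ then follows at once: each $T_h$ is mutually algebraic in a countable language (as noted above), so it is not $\lambda$-Borel complete for large $\lambda$, yet it is Borel complete by Theorem~\ref{big}.

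The one step that needs care, and which I expect to be the main obstacle, is the justification that distinct ordinals of the same cardinality are $\equiv_{\infty,\omega}$-inequivalent; everything else is bookkeeping with cardinal arithmetic. I would handle this either by citing the canonical Scott analysis of well-orders, or by a direct induction on $\max(\alpha,\beta)$: in any purported back-and-forth system between $\alpha<\beta$, Player~I plays in $\beta$ an element whose set of predecessors has order type $\alpha$, which Player~II cannot match in $\alpha$, forcing $\alpha=\beta$.
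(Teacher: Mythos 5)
Your proof is correct and follows the same counting strategy as the paper's: the preceding Lemma caps the number of $\equiv_{\infty,\omega}$-classes of models of a mutually algebraic $T$ at $\beth_2$, any $\lambda$-Borel reduction is injective on $\equiv_{\infty,\omega}$-classes, and so a class with too many inequivalent models of size $\lambda$ cannot reduce into $\Mod_\lambda(T)$. The only difference is the witness family: the paper uses $2^\lambda$ pairwise back-and-forth-inequivalent graphs (left as ``readily checked''), whereas you use the $\lambda^+$ ordinals in $[\lambda,\lambda^+)$ viewed as linear orders on universe $\lambda$ --- equally sufficient since $\lambda^+>\beth_2$, and slightly more self-contained in that the $\equiv_{\infty,\omega}$-inequivalence of distinct well-orders is a standard fact you justify rather than defer.
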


\begin{proof}  Fix $\lambda\ge \beth_2$.  It is readily checked that there is a family of  $2^{\lambda}$ graphs that are pairwise not back and forth equivalent.
As there are fewer than $2^{\lambda}$ $\equiv_{\infty,\omega}$-classes of models of $T$, there cannot be a $\lambda$-Borel reduction of graphs into $\Mod_\lambda(T)$.
\end{proof}
 
 In \cite{URL}, another example of a Borel complete theory that is not $\lambda$-Borel complete for all $\lambda$ is given (it is dubbed $TK$ there) but the $T_h$ examples
 are cleaner.  In order to understand this behavior, in \cite{URL} we call a theory $T$ {\em grounded} if every potential canonical Scott sentence $\sigma$ of a model of $T$ (i.e.,
 in some forcing extension $\V[G]$ of $\V$, $\sigma$ is a canonical Scott sentence of some model, then $\sigma$ is a canonical Scott sentence of a model in $\V$.
Proposition~5.1 of  \cite{URL} proves that every theory of refining equivalence relations is grounded.  By contrast, we have

\begin{Proposition}  If $T$ is Borel complete with a cardinal bound on the number of $\equiv_{\infty,\omega}$-classes of models,
then $T$ is not grounded.  In particular, $T_h$ is not grounded.
\end{Proposition}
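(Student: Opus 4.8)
The plan is to argue by contradiction, combining the absoluteness of Borel reductions with the homogeneity of a collapse forcing. Fix a cardinal $\mu\ge\aleph_0$ bounding, in $\V$, the number of $\equiv_{\infty,\omega}$-classes of models of $T$, and suppose toward a contradiction that $T$ is grounded. Since $T$ is Borel complete, I would first fix in $\V$ a Borel reduction $f$ from the class of (countable) graphs to $\Mod(T)$, together with a Borel code $c\in\V$ for $f$. The statement that $f$ is a reduction, namely that for all countable graphs $\G,\H$ one has $\G\iso\H$ if and only if $f(\G)\iso f(\H)$, is $\Pi^1_2$ in the code $c$ (isomorphism of countable structures is $\Sigma^1_1$), so by Shoenfield absoluteness it continues to hold in every set-forcing extension $\V[G]$.

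Next I would manufacture many potential canonical Scott sentences. Choose any $\lambda\ge\max(\mu,\beth_2)$, so that $2^\lambda\ge 2^\mu>\mu$, and fix in $\V$ a family $\{\G_i:i<2^\lambda\}$ of graphs on $\lambda$ that are pairwise $\not\equiv_{\infty,\omega}$; such a family exists exactly as in the proof of the preceding Corollary. Force with the weakly homogeneous collapse $\mathrm{Coll}(\omega,\lambda)$ to obtain $\V[G]$, in which every $\G_i$ has become countable. Because $\equiv_{\infty,\omega}$ and its negation are absolute, the $\G_i$ remain pairwise $\not\equiv_{\infty,\omega}$, hence are pairwise non-isomorphic countable graphs in $\V[G]$. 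Applying $f$, which is still a reduction in $\V[G]$ by the first step, yields pairwise non-isomorphic countable models $f(\G_i)\models T$, and therefore $2^\lambda$ pairwise $\not\equiv_{\infty,\omega}$ models of $T$ in $\V[G]$.

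The heart of the argument, and the step I expect to be the main obstacle, is to see that the canonical Scott sentences $\tau_i$ of the models $f(\G_i)$ actually lie in $\V$, so that groundedness can be applied to them. Here I would use that the canonical Scott sentence is an $\equiv_{\infty,\omega}$-invariant computed by an absolute procedure, that $\G_i$ and $c$ are parameters from $\V$, and that $\mathrm{Coll}(\omega,\lambda)$ is weakly homogeneous. Since $f$ is a reduction, $\tau_i$ depends only on the $\equiv_{\infty,\omega}$-type of the countable copy of $\G_i$, i.e. on the $\V$-object $\mathrm{css}(\G_i)$ and on $c$, and not on the particular generic copy; thus its truth value is invariant under the automorphisms of the forcing and is decided by the trivial condition, whence $\tau_i\in\V$. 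Granting this, each $\tau_i\in\V$ is the canonical Scott sentence of a model of $T$ in the extension $\V[G]$, that is, a potential canonical Scott sentence of a model of $T$; and the $\tau_i$ are pairwise distinct since $f(\G_i)\not\iso f(\G_j)$ for $i\ne j$.

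Finally I would close the loop. Since $T$ is assumed grounded, each of the $2^\lambda$ potential canonical Scott sentences $\tau_i$ must in fact be the canonical Scott sentence of some model of $T$ lying in $\V$; as the $\tau_i$ are distinct, this produces $2^\lambda>\mu$ pairwise $\equiv_{\infty,\omega}$-inequivalent models of $T$ in $\V$, contradicting the choice of $\mu$. Hence $T$ is not grounded. For the final clause, $T_h$ is Borel complete by Theorem~\ref{big} and, being mutually algebraic, has at most $\beth_2$ many $\equiv_{\infty,\omega}$-classes by the Lemma preceding the Corollary; so the general statement applies with $\mu=\beth_2$ and shows that $T_h$ is not grounded.
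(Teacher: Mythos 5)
Your outer architecture is sound, and it is worth saying how it relates to the paper's argument: the paper's own proof is a three-line counting argument that delegates all the forcing work to Theorem~3.10 of \cite{URL}. Namely: if $T$ were grounded, then the cardinal $\kappa$ bounding the number of $\equiv_{\infty,\omega}$-classes would also bound the number of potential canonical Scott sentences of models of $T$ (each potential CSS would be the CSS of a model in $\V$, and models with distinct CSS are $\equiv_{\infty,\omega}$-inequivalent); since graphs have a proper class of potential CSS, Theorem~3.10 of \cite{URL} (Borel reducibility transfers potential cardinality) then contradicts Borel completeness. What you have written is, in effect, a from-scratch reconstruction of the proof of that cited theorem, specialized to this situation: Shoenfield absoluteness of the reduction, collapsing $\lambda$, preservation of pairwise $\not\equiv_{\infty,\omega}$, distinctness of the sentences $\tau_i$, and then groundedness pulling $2^\lambda>\mu$ inequivalent models back into $\V$. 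All of those steps are correct.

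The genuine gap is exactly where you predicted it: the inference ``its truth value is invariant under the automorphisms of the forcing and is decided by the trivial condition, whence $\tau_i\in\V$.'' Weak homogeneity of $\mathrm{Coll}(\omega,\lambda)$ gives that every statement with parameters from $\V$ is decided by the trivial condition, but this does \emph{not} imply that an object of $\V[G]$ definable from $\V$-parameters lies in $\V$. That implication holds only for objects that are (hereditarily) subsets of $\V$: for instance, in $\V[G]$ with $G\subseteq\mathrm{Coll}(\omega,\omega_1)$, the set of all reals coding well-orders of $\omega$ of type $\omega_1^{\V}$ is definable from the $\V$-parameter $\omega_1^{\V}$, yet it is not in $\V$ because its elements are not. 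Your $\tau_i=\mathrm{css}(f(\G_i))$ is a priori of the same character: it is a hereditarily countable set of $\V[G]$ whose constituents (the Scott formulas of tuples of $f(\G_i)$) are themselves new objects, so homogeneity alone does not place it in $\V$. Two standard repairs: (i) run an induction on the hereditary structure of the Scott analysis, showing each level is a $\V$-definable set of $\V$-objects; or, cleaner, (ii) use mutual genericity: if $G,H$ are mutually generic collapses, the two countable copies of $\G_i$ are isomorphic in $\V[G][H]$, and since $f$ and the CSS computation are suitably absolute, $\tau_i$ computed in $\V[G]$ equals $\tau_i$ computed in $\V[H]$; hence $\tau_i\in\V[G]\cap\V[H]=\V$. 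Closing this step is precisely the content of Theorem~3.10 of \cite{URL}, which is why the paper simply cites it; with repair (ii) inserted, your proof goes through.
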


\begin{proof}  Let $\kappa$ denote the number of $\equiv_{\infty,\omega}$-classes of models of $T$.  If $T$ were grounded, then $\kappa$ would also bound the number of potential canonical Scott sentences.  As the class of graphs has a proper class of potential canonical Scott sentences, it would follow from Theorem~3.10 of \cite{URL} that $T$ could not  be Borel complete.\end{proof}

\end{document}